\tikzstyle{walk}=[semithick]
\tikzstyle{walka}=[very thick]
\tikzstyle{point}=[circle,fill,minimum size=.8mm,inner sep=0pt,outer sep=0pt]
\tikzstyle{pointa}=[circle,fill,minimum size=1mm,inner sep=0pt,outer sep=0pt]
\newcommand\walk[1]{
\if#1u\up\fi
\if#1d\down\fi
\if#1U\Up\fi
\if#1D\Down\fi
\if#1;\else\expandafter\walk\fi}
\DeclareRobustCommand\mdyck{$m$\nobreakdash-Dyck}
\DeclareRobustCommand\mluka{$m$\nobreakdash-Łuka\-sie\-wicz}
\newcommand\U{{\mathbf u}}
\newcommand\D{{\mathbf d}}
\newcommand\fold{\phi}
\newcommand\unfold{\phi^{-1}}
\newcommand\nr{{n'}}
\newcommand\hr{{h'}}
\title{A new bijection on \mdyck\ paths\\with application to random sampling}
\author{Axel Bacher}
\begin{document}

\maketitle

\begin{abstract}
We present a new bijection between variants of \mdyck\ paths (paths with steps
in $\{+1,-m\}$ starting and ending at height~$0$ and remaining at non-negative
height), which generalizes a classical bijection between Dyck prefixes and
pointed Łukasiewicz paths. As an application, we present a new random sampling
procedure for \mdyck\ paths with a linear time complexity and using a
quasi-optimal number of random bits. This outperforms Devroye's algorithm,
which uses $\bigoh(n\log n)$ random bits.
\end{abstract}

\section{Introduction}

Dyck paths---paths with steps in $\{\nearrow,\searrow\}$ which start and end at
height~$0$ and remain above the $x$-axis---are a cornerstone object in
combinatorics. They are counted by the ubiquitous Catalan numbers and are in
bijection with hundreds of other objects, among them binary plane trees (see
\cite{stanley2} for a list).

What makes Dyck paths especially interesting is their rich combinatorics. For
instance, the classical proof of the formula
\smash{$\frac1{n+1}\binom{2n}{n}$} for the Catalan numbers relies on the Cycle
Lemma, which operates on Dyck paths. Many bijections exist between variants of
Dyck paths (such as Dyck prefixes, which are not constrained to end at zero),
for instance relying on the \emph{Catalan decomposition} illustrated in
Figure~\ref{fig:catalan}. Many examples can be found, among others, in
\cite[Chapter~9]{lothaire}.

\def\up{ -- ++(1,1) node [point] {}}
\def\down{ -- ++(1,-1) node [point] {}}
\def\Up{ -- ++(1,1) node [pointa] {}}
\def\Down{ -- ++(1,-1) node [pointa] {}}

\begin{figure}[htb]
\hfil%
\begin{tikzpicture}[scale=.2]
\foreach \y in {0,...,5} \draw[help lines] (0,\y) -- ++(22,0);
\draw[walk] (0,0) node [point] {} \walk uuduuddduududuuudduuud;;
\draw[red,walka] (8,0) node [pointa] {} \walk U;
(13,1) node [pointa] {} \walk U;
(18,2) node [pointa] {} \walk UU;;
\end{tikzpicture}
\caption{A Dyck prefix of height $h = 4$ with its Catalan decomposition
$q_0{\color{red}\U}\dotsm{\color{red}\U}q_h$, where $q_0,\dotsc,q_h$ are
(possibly empty) Dyck paths.} \label{fig:catalan}
\end{figure}

A natural generalization of Dyck paths are paths with steps in $\{+1,-m\}$ for
some integer~$m\ge1$. These are called \emph{\mdyck\ paths} \cite{duchon} and
are in bijection with $m+1$-ary trees. Their counting sequence is called the
Fuss-Catalan numbers, equal to $\smash{\frac1{mn+1}\binom{(m+1)n}{n}}$.
The goal of this paper is to present a new bijection on \mdyck\ paths, which
corresponds in the case $m = 1$ to a well-known bijection between variants of
Dyck paths (Dyck prefixes and pointed Łukasiewicz paths), based on the Catalan
decomposition.

\bigskip

As an application of our bijection, we present a random sampling procedure for
\mdyck\ paths. Random sampling---finding an algorithm that outputs an element
of a given combinatorial class with a prescribed (usually uniform among the
objects of a given size) random distribution, as efficiently as possible---is
an important area of combinatorics with many theoretical and practical
applications (for instance, it can lead to conjectures on the properties of
large objects, or enable testing programs on random large inputs). The random
sampling of Dyck-like paths (or equivalently, plane trees) has attracted a lot
of attention. In the case of Dyck paths (or Motzkin paths, which allow
$\rightarrow$ steps), efficient techniques include anticipated rejection
\cite{barcucci,barcucci2} and Rémy's algorithm \cite{remy,motzkin2}. In a more
general case, including \mdyck\ paths, we have Devroye's algorithm
\cite{devroye} based on the Cycle Lemma.

The efficiency of a random sampling algorithm is measured in time, space and
random bits. The random bit complexity (as opposed to, for instance, the
number of calls to a uniform continuous random variable) is a realistic model
of the randomness consumed by the algorithm, developped for instance in
\cite{knuth}. By that measure, Devroye's algorithm uses $\bigoh(n\log n)$
random bits, since it involves drawing a uniform permutation. We show that our
algorithm has a linear cost with all three respects. In fact, we show that it
is \emph{asymptotically entropic} in the sense that the number of
random bits consumed is asymptotically equivalent to the entropy of the
\mdyck\ paths, which is an information-theoretical lower bound on the random
bit complexity.

\bigskip

This article is organized as follows. In Section~\ref{sec:defs}, we define the
classes of paths between which our bijections operates. The folding bijection
itself is presented in Section~\ref{sec:folding}. In Section~\ref{sec:random},
we show our random sampling algorithm and prove that it has linear complexity.
Finally, in Section~\ref{sec:limit}, we study the limit distribution of the
time complexity, which turns out to have unusual properties.

\section{Definitions} \label{sec:defs}

Throughout the paper, let $m\ge1$ be an integer. We consider paths with two
kinds of steps, $\U$ and $\D$, with respective heights $1$ and~$-m$. The
height of a path is defined as the sum of the heights of its steps. In the
rest of the section, we consider a path~$w$ with length~$n$ and height~$h$.
We introduce the Euclidean divisions of~$n$ and~$h$ by~$m+1$:
\begin{align*}
n &= (m+1) \nr + r\text;&
h &= (m+1) \hr + r\text.
\end{align*}
The remainders are the same since the heights of both steps~$\U$ and $\D$ are
congruent to~$1$ modulo~$m+1$. The height~$h$ is therefore determined by the
quotient~$\hr$, that we call the \emph{reduced height} of~$w$.

\bigskip

We say that the path~$w$ is an \emph{\mluka\ path} if every proper prefix~$p$
of~$w$ satisfies $h(p)\ge0$ but the whole path satisfies $h(w) < 0$. The final
height~$h$ ranges between~$-m$ and~$-1$, as determined by~$r$ (the reduced
height~$\hr$ is~$-1$). In particular, no \mluka\ path exists with a length
divisible by~$m+1$.

\bigskip

Finally, we say that the path~$w$ is an \emph{\mdyck\ prefix} if every
prefix~$p$ of~$w$ satisfies $h(p)\ge0$. If $r\ne0$, we call \emph{decoration}
of~$w$ a sequence $a_0,\dotsc,a_{\hr}$ of integers satisfying:
\[\begin{cases}
1 \le a_i\le m\text,&i = 0,\dotsc,\hr-1\text;\\
1 \le a_{\hr}\le r\text.
\end{cases}\]
We call the path~$w$ thus equipped a \emph{decorated path}. From the
constraints, we see that the number of possible decorations of~$w$ is
$rm^{\hr}$. In the case of Dyck prefixes ($m = 1$), there is only one possible
decoration for every Dyck prefix of odd length and zero for prefixes of even
length.

\section{The folding bijection} \label{sec:folding}

The goal of this section is to provide a bijection between the two objects
defined above, decorated \mdyck\ prefixes and pointed \mluka\ paths (i.e.,
with a distinguished step). We then provide an enumeration result as a first
application. In the case $m = 1$, our bijection reduces to a well-known
bijection between Dyck prefixes of odd length and pointed Łukasiewicz paths
\cite[Chapter~9]{lothaire}.

Let $w$ be an \mdyck\ prefix equipped with a decoration $(a_0,\dotsc,a_k)$.
Write~$w$ in the form:
\begin{equation} \label{w}
w = p\s\U q_0\dotsm\U q_k\text,
\end{equation}
where, for $i=0,\dotsc,k$, the path~$q_i$ is an \mdyck\ prefix of
height~$a_i-1$ (this is done by identifying first the factor~$\U q_k$ as the
smallest suffix of~$w$ of height~$a_k$ and working backwards to get the other
factors). Let $\fold(w)$ be the path:
\begin{equation} \label{wp}
\fold(w) = p\s q_0\D\dotsm q_k\D\text,
\end{equation}
pointed on the first step of~$q_0\D$. We call this operation \emph{folding}
the path~$w$.

To recover the path~$w$ from the pointed path~$\fold(w)$, let $pq$ be the
factorization of~$\fold(w)$ obtained by cutting before the pointed step. Let
$q_0\D$ be the smallest \mluka\ prefix of~$q$; repeat this process to get the
factorization \eqref{wp}. The path~$w$ is then recovered as \eqref{w} and the
decoration $a_0,\dotsc,a_k$ as the heights of the factors $\U q_i$. We call
this operation \emph{unfolding} the pointed path~$\fold(w)$.
The folding operation is illustrated in Figure~\ref{fig:fold}.

\def\down{ -- ++(1,-3) node [point] {}}
\def\Down{ -- ++(1,-3) node [pointa] {}}

\begin{figure}[htb]\small\hfil%
\begin{tikzpicture}[scale=.2,baseline=0cm]
\foreach \y in {0,...,12} \draw[help lines] (0,\y) -- ++(22,0);
\draw[walk] (0,0) node [point] {} \walk uuuduuuuuuuuduuuuuuudu;;
\draw[walka,red]
(8,4) node (a) [pointa] {} \walk U;
(13,5) node (b) [pointa] {} \walk U;
(16,8) node (c) [pointa] {} \walk U;
(22,10) node (d) [pointa] {};
\draw[very thin,red] (a) -- ++(16,0) (b) -- ++(11,0) (c) -- ++(8,0) (d) --
++(2,0);
\node at (24,4.5) [right,red] {$a_0=1$};
\node at (24,6.5) [right,red] {$a_1=3$};
\node at (24,9) [right,red] {$a_2=2$};
\node at (11,-2)
{$p\s {\color{red}\U}q_0{\color{red}\U}q_1{\color{red}\U}q_2$};
\end{tikzpicture}\hfil\hfil%
\begin{tikzpicture}[scale=.2,baseline=0cm]
\foreach \y in {0,...,8} \draw[help lines] (0,\y) -- ++(22,0);
\draw[walk] (0,0) node [point] {} \walk uuuduuuuuuudduuduuudud;;
\draw[walka,red] (8,4) node (a) [pointa] {}
(12,4) node [pointa] {} \walk D;
(15,3) node [pointa] {} \walk D;
(21,1) node [pointa] {} \walk D;
(a) circle (.55);
\node at (11,-2) {$p\s q_0{\color{red}\D}q_1{\color{red}\D}q_2{\color{red}\D}$};
\end{tikzpicture}\hfil
\caption{Left: a decorated $3$-Dyck prefix of length~$22$ and height~$10$ ($\nr
= 5$, $\hr = 2$ and $r = 2$). Right: its image by the folding operator~$\fold$,
a pointed 3-Łukasiewicz path.}
\label{fig:fold}
\end{figure}

\begin{theorem}
The folding operation~$\fold$ is a bijection from decorated \mdyck\ prefixes
to pointed \mluka\ paths.
\end{theorem}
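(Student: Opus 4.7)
My plan is to verify that $\fold$ and the unfolding described in the text are well-defined mutual inverses; this reduces to three checks.

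First, I would show that the factorization~\eqref{w} exists and is unique for any valid decoration. The shortest suffix of~$w$ of height~$a_k$ corresponds to the last index~$i$ with $h(w_1 \cdots w_i) = h(w) - a_k$, and I would argue that it must begin with a~$\U$-step: otherwise the path would drop below $h(w) - a_k$ and could not return to $h(w)$ without recrossing that level, contradicting maximality of~$i$. Writing this suffix as $\U q_k$, the factor $q_k$ is automatically an \mdyck\ prefix of height $a_k - 1$, and the remaining prefix has height $h(w) - a_k \ge (m+1)k$, so the same argument iterates with the shortened decoration $(a_0, \dotsc, a_{k-1})$.

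Second, I would verify that $\fold(w)$ is a pointed \mluka\ path. A direct computation gives $h(\fold(w)) = h(w) - (k+1)(m+1) = r - m - 1 \in \{-m, \dotsc, -1\}$, the correct range. For the non-negativity of proper prefixes, the critical points are the ends of $p\, q_0 \D \dotsm q_{j-1}\D$; using $h(p) + \sum_{i \le k} a_i = k(m+1) + r$, their heights rewrite as $(m+1)(k-j) + r - \sum_{i \ge j} a_i$, which is $\ge 0$ thanks to the decoration bounds $a_i \le m$ and $a_k \le r$. Between two consecutive $\D$ steps the path only climbs, since each $q_j$ is an \mdyck\ prefix, so these points are indeed the minima.

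Third, given a pointed \mluka\ path $P = pq$ split before the pointed step, I would show that the greedy decomposition of~$q$ into minimal \mluka\ pieces $Q_0 \dotsm Q_k$ is well-defined, with each $Q_i = q_i \D$ for $q_i$ an \mdyck\ prefix, giving $a_i \in \{1, \dotsc, m\}$. The sharper bound $a_k \le r$ follows by applying non-negativity to the proper prefix $p\, Q_0 \dotsm Q_{k-1}$ of~$P$: its height equals $h(P) - h(Q_k) = r - a_k$, and so $\ge 0$. Mutual inversion is then immediate, since in $\fold(w)$ the partial heights $\sum_{i<j} a_i - j(m+1)$ are strictly negative for every $1 \le j \le k+1$, so the greedy decomposition recovers precisely the factors $q_i\D$ of the folding. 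The main obstacle I anticipate is the backward suffix argument in the first step, which underpins both the \mdyck-side factorization and the greedy decomposition on the \mluka-side.
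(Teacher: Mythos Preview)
Your proof is correct and follows essentially the same route as the paper, which packages the same three checks as biconditionals (each $q_i\D$ is \mluka\ iff $a_i\le m$; $\hr(\fold(w))=-1$ iff $k=\hr(w)$; given these, $\fold(w)$ is \mluka\ iff $a_k\le r$) rather than computing the prefix heights explicitly as you do. One minor slip to fix: in your second step you say that between two consecutive $\D$ steps ``the path only climbs,'' but an \mdyck\ prefix~$q_j$ may well contain down steps---what you actually need (and what makes those endpoints the segment minima) is that $q_j$ never drops below its starting level.
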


\begin{proof}
Let $w$ be an \mdyck\ prefix of height~$h$ written as \eqref{w} and, for
$i=0,\dotsc,k$, let~$a_i$ be the height of~$\U q_i$ (for now, without any
constraint on~$k$ or the $a_i$'s). The proof of the lemma hinges on the three
following facts.
\begin{itemize}
\item For $i = 0,\dotsc,k$, the path~$q_i\D$ is \mluka\ if and only if $a_i\le
m$.
\item Since $\fold(w)$ is obtained from~$w$ by turning $k+1$ up steps into
down steps, we have $\hr(\fold(w)) = -1$ if and only if $\hr(w) = k$.
\item If the factors $q_i\D$ are \mluka\ and if $\hr(\fold(w)) = -1$, the
folded path~$\fold(w)$ is \mluka\ if and only if the starting height of the
factor~$q_k\D$ is nonnegative. Since the final height is~$r - m - 1$, this is
equivalent to~$a_k\le r$.
\end{itemize}

Together, these facts show that the folding of a decorated \mdyck\ prefix is
an \mluka\ path and, conversely, that unfolding a pointed \mluka\ path yields
a decorated \mdyck\ prefix. Moreover, the first fact shows that the folding
and unfolding operations are inverse.
\end{proof}

\begin{proposition} \label{prop:enum}
The number $L_n$ of \mluka\ paths of length~$n$ is:
\begin{equation} \label{L}
L_n = \frac rn\binom{n}{\nr}\text.
\end{equation}
Let $P_n(u) = \sum_wu^{\hr(w)}$ where the sum runs over all \mdyck\ prefixes of
length~$n$. We have:
\begin{equation} \label{P}
P_n(m) = \binom{n}{\nr}\text.
\end{equation}
\end{proposition}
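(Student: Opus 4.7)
My plan is to establish (\ref{L}) by a short bijection plus the Cycle Lemma, obtain (\ref{P}) for $r\ne0$ as an immediate consequence of the folding theorem, and finally handle the residual case $r=0$ of (\ref{P}) by a last-step recurrence.

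For (\ref{L}): any \mluka\ path $w$ of length~$n$ must end in a $\D$ step, since its proper prefixes are non-negative while $h(w)<0$. Writing $w=u\D$, the path $u$ is an \mdyck\ prefix of length $n-1$ and height $r-1$, which forces $r\ge1$; the case $r=0$ gives $L_n=0$ in agreement with the formula. Prepending a $\U$ to $u$ produces a path $v=\U u$ of length~$n$ with $\nr$ down-steps, final height~$r\ge1$, and all partial sums at least $1$; the assignment $w\leftrightarrow v$ is clearly bijective. Thus $L_n$ equals the number of such strictly-positive length-$n$ paths with $\nr$ down-steps. The Cycle Lemma applied to length-$n$ sequences in $\{+1,-m\}$ (steps $\le 1$) with sum $r\ge1$ asserts that exactly $r$ of the $n$ cyclic rotations are strictly positive. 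Since $\binom{n}{\nr}$ sequences have $\nr$ down-steps overall, this yields $L_n=\frac{r}{n}\binom{n}{\nr}$.

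The case $r\ne0$ of (\ref{P}) is then immediate. Each \mdyck\ prefix carries $rm^{\hr}$ decorations and each \mluka\ path carries $n$ pointings, so the folding theorem gives $rP_n(m)=nL_n=r\binom{n}{\nr}$, whence $P_n(m)=\binom{n}{\nr}$ after dividing by $r$.

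The main obstacle is the case $r=0$, where the folding theorem collapses to the tautology $0=0$. Here I plan to exploit length~$n+1$, which has remainder~$1$ and is therefore already covered. An \mdyck\ prefix of length $n+1$ decomposes according to its last step: those ending in $\U$ correspond bijectively to \mdyck\ prefixes of length $n$ with the same reduced height, while those ending in $\D$ correspond to \mdyck\ prefixes of length $n$ of height at least $m$ with reduced height decreased by one. Tracking the weights $m^{\hr}$ produces $P_{n+1}(m)=\frac{m+1}{m}P_n(m)-\frac{1}{m(m\nr+1)}\binom{n}{\nr}$, where the subtracted term counts \mdyck\ paths of length $n$ via the Fuss--Catalan formula (itself a standard Cycle-Lemma application). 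Substituting $P_{n+1}(m)=\binom{n+1}{\nr}$ from the case just treated and simplifying with the identity $n-\nr=m\nr$ yields $P_n(m)=\binom{n}{\nr}$, completing the proof.
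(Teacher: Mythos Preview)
Your argument is correct and follows the paper's three-part structure: the Cycle Lemma for $L_n$, the folding bijection for $P_n(m)$ when $r\ne0$, and a last-step recurrence for the residual case $r=0$. The only real difference is the direction of that recurrence. The paper steps \emph{backward} to length $n-1$, which has remainder~$m$; there every \mdyck\ prefix automatically has height at least~$m$, so both extensions by $\U$ and by $\D$ are always admissible and one gets the clean identity $P_n(u)=(1+u)P_{n-1}(u)$, hence $P_n(m)=(m+1)\binom{n-1}{\nr-1}=\binom{n}{\nr}$ in one line. Your forward step to length $n+1$ also works, but the height-$0$ prefixes (the genuine \mdyck\ paths) cannot take a $\D$ step, which is what forces the Fuss--Catalan correction term and the extra binomial algebra. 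Same idea, but the paper's choice of direction avoids that detour.
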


When $m = 1$, one recovers the well-known results on the enumeration of Dyck
paths (the evaluation $P_n(1)$ is simply the number of Dyck prefixes).

\begin{proof}
Let $q\D$ be an \mluka\ path of length~$n$. We transform it into the path~$\U
q$, which is a path of height~$r$ staying strictly above its origin. This fits
the conditions of the Cycle Lemma, which states that a proportion $r / n$ of
the paths with $\nr$ down steps have that property. This gives the formula
for~$L_n$.

To enumerate \mdyck\ prefixes, we use our bijection when $r\ne0$. There
are $nL_n$ pointed \mluka\ paths and therefore $nL_n$ decorated \mdyck\
prefixes. Every prefix with reduced height~$\hr$ has $rm^{\hr}$ possible
decorations, which gives the result.

If $r = 0$, this breaks down because there are no \mluka\ paths. 
The \mdyck\ prefixes of length~$n$ are then the prefixes of length~$n-1$ plus a
single $\D$ or $\U$ step. This gives $P_n(u) = (1 + u)P_{n-1}(u)$, which
implies the formula.
\end{proof}

\section{Random sampling} \label{sec:random}

In this section, we show how to use the unfolding bijection to build a very
efficient random sampling algorithm for \mdyck\ paths. In fact, our algorithm
returns a uniformly distributed \mluka\ path; to draw an \mdyck\ path of
length~$n$, it suffices to draw an \mluka\ path of length~$n+1$ and delete the
final~$\D$ step. In the case $m = 1$, the algorithm already appeared in
\cite{motzkin2}, but the time complexity analysis is new.

\begin{algorithm}[htb]
\DontPrintSemicolon
\caption{Random \mluka\ path} \label{algo}
\KwIn{A length~$n$ not divisible by~$m+1$}
\KwOut{A random \mluka\ path of length~$n$}
$w \leftarrow \epsilon$\;
\For{$i = 1,\dotsc,n$}{
    $w \leftarrow\bigl(w\U$ with probability $\frac m{m+1}$, $w\D$ with
probability $\frac1{m+1}\bigr)$\;
    \If{$h(w) < 0$}{
	draw uniformly a point in~$w$\;
	$w \leftarrow \unfold(w)$ (forget the decoration)\;
    }
}
draw uniformly a decoration of~$w$\;
$w \leftarrow \fold(w)$ (forget the point)\;
\Return{$w$}\;
\end{algorithm}


\begin{theorem} \label{thm:correct}
Algorithm~\ref{algo} returns a uniformly distributed \mluka\ path of
length~$n$.
\end{theorem}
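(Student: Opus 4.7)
The plan is to track the distribution $\mu_i$ of the value of $w$ after the $i$-th iteration of the main loop, and to prove by induction on $i$ that
\[
\mu_i(p) = \frac{m^{\hr(p)}}{Z_i}
\]
for every \mdyck\ prefix $p$ of length $i$, where $Z_i$ is a normalizing constant depending only on $i$. Granting the claim at $i=n$, the tail of the algorithm converts this into the uniform distribution on \mluka\ paths: since each such $p$ admits $rm^{\hr(p)}$ decorations (with $r\neq0$ by hypothesis on $n$), drawing a uniform decoration of $w$ produces the uniform distribution on decorated \mdyck\ prefixes of length~$n$; the bijection $\fold$ then transports this to the uniform distribution on pointed \mluka\ paths; and forgetting the point averages over the $n$ pointings of each \mluka\ path to give the desired distribution.

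For the inductive step, I would fix a prefix $p$ of length $i$ and decompose $\mu_i(p)$ according to whether the $i$-th iteration ended with an unfolding. The no-unfolding contribution comes from the unique predecessor $p'$ obtained by removing the last step of $p$, weighted by either $m/(m+1)$ or $1/(m+1)$. A short case analysis on the last step of $p$ and on the remainder of $i-1$ modulo $m+1$ relates $\hr(p')$ to $\hr(p)$ and shows that this contribution is a clean multiple of $m^{\hr(p)}$.

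The unfolding contribution is where the folding bijection really earns its keep. For each decoration $\mathrm{dec}$ of $p$, the bijection produces a unique pointed \mluka\ path $(\tilde p'\D, j) = \fold(p, \mathrm{dec})$ of length $i$; here $\tilde p'$ is the state that entered step $i$ on this branch, $\D$ is the appended step, and $j$ is the point drawn during unfolding. The crucial observation is that because $\tilde p'\D$ is \mluka\ of length $i$, the height of $\tilde p'$ is forced (by its length and its final height) to equal $r-1$, where $r$ denotes the remainder of $i$ modulo $m+1$, so $\hr(\tilde p')=0$ independently of $\mathrm{dec}$. Therefore $\mu_{i-1}(\tilde p')=1/Z_{i-1}$ is constant across the $rm^{\hr(p)}$ decorations of $p$, and the sum contributes a term again proportional to $m^{\hr(p)}$. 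Combining the two contributions determines $Z_i$ and closes the induction.

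The main obstacle I expect is simply bookkeeping: the boundary where the remainder of $i$ equals zero has to be treated separately (there the unfolding branch is empty, as no \mluka\ path of length $i$ exists, but the conclusion still holds), and one has to keep track of how $\hr(p)-\hr(p')$ depends on the last step of $p$ and on whether the remainder of $i-1$ equals $m$. The conceptual engine of the proof is the rigidity $\hr(\tilde p')=0$ in the unfolding branch, which is exactly what lets the decoration sum collapse into a single power of $m$.
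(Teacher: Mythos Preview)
Your proposal is correct and follows essentially the same approach as the paper: both establish the loop invariant that after $i$ iterations $w$ is distributed as $\Pi_i$ (probability proportional to $m^{\hr(w)}$), then use the folding bijection to convert this into the uniform law on \mluka\ paths. The only difference is organizational---the paper argues ``forward'' (after appending a step the law is still proportional to $m^{\hr}$, and conditional on the branch being taken $w$ is a uniform \mluka\ path, which the bijection sends to a uniform decorated prefix), whereas you argue ``backward'' by summing preimages of a fixed $p$; your observation $\hr(\tilde p')=0$ is exactly the dual of the paper's observation that all \mluka\ paths have $\hr=-1$.
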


Before proving the theorem, we state our results on the complexity. We choose
two models of complexity, which account for the overwhelming majority of the
execution time in practice: the number~$R_n$ of random bits drawn and the
number~$M_n$ of memory accesses. We show that both complexities are linear and
derive their limit laws.

Let $\beta$ be the number of random bits necessary to draw a Bernoulli
variable of parameter \smash{$\frac1{m+1}$}. Moreover, let $S$ be an
inhomogeneous Poisson process on~$(0,1]$ with density~$\lambda(x)
=
\ifrac1{2x}$. Let $X$ be the random variable:
\[X = \sum_{x\in S}\unif[0,x]\text,\]
where all uniforms are independent from each other and from~$S$. Let
$U\sim\unif[0,1]$ independent from~$S$.

\begin{theorem} \label{thm:analysis}
The random variables $R_n$ and $M_n$ satisfy, as $n$ tends to infinity:
\begin{align*}
\frac{R_n}{n}&\tod\beta\text;&
\frac{M_n}{n}&\tod1 + X + U\text.
\end{align*}
\end{theorem}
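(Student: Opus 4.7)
The plan is to decompose $M_n = n + \sum_k C_k + F_n$, where the leading $n$ counts one memory access per iteration of the main loop, $C_k$ is the cost of the $k$-th unfolding call, and $F_n$ is the cost of the final fold. The companion $R_n$ is dominated by the $n\beta$ bits used to draw the biased Bernoulli steps; the remaining randomness (a uniform point at each unfolding, a final uniform decoration) consumes $O(\log n)$ bits per invocation and is invoked $O(\log n)$ times with high probability, hence contributes $o(n)$. Thus the analysis of $R_n$ reduces to controlling the unfolding count, while the bulk of the work lies in the analysis of $M_n$.

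The first key step is a distributional invariant: after iteration $i$ the path $w_i$ satisfies $\Pr[w_i = w] = m^{\hr(w)}/\binom{i}{\nr}$ for every \mdyck\ prefix $w$ of length~$i$. This is a direct consequence of Theorem~\ref{thm:correct} together with the folding bijection, under which a uniformly drawn decorated prefix corresponds to a uniformly drawn pointed \mluka\ path; marginalising over decorations produces the weight $r m^{\hr(w)}$, which normalises via Proposition~\ref{prop:enum}. An inductive verification against the algorithm's transition rules also works and cleanly handles the residual case $r = 0$.

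This invariant yields the exact probability of an unfolding at step~$i$, namely $\tfrac{1}{m+1}\mathbf{1}[r\ne0]\cdot \Pr[\hr(w_{i-1})=0]$. An elementary Cycle Lemma computation in the spirit of Proposition~\ref{prop:enum} gives $\Pr[\hr(w_{i-1})=0]\sim(r+1)(m+1)/(mi)$, and averaging over one period of $m+1$ consecutive values of $i$ (using $\sum_{r=1}^{m}r = m(m+1)/2$) yields an unfolding rate of $\sim 1/(2i)$ per step, matching the density $\lambda(x) = 1/(2x)$ in scaled time $x = i/n$. A standard Poisson limit theorem (Stein--Chen coupling or convergence of factorial moments) then promotes this to joint convergence of the rescaled unfolding times $(i_k/n)$ to $S$. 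Since the $k$-th unfolding cost $C_k$ is uniform in $[0,i_k]$ (the length of the suffix of $w$ past the uniform pointed step), one obtains $\sum_k C_k/n \tod X$. For the final fold, $F_n = n - |p|$ is the suffix of $w$ scanned while reading off the factorisation $w = p\U q_0\cdots\U q_k$ from the uniform decoration; under the folding bijection, $|p|+1$ is precisely the distinguished step of the output pointed \mluka\ path, hence it is uniform in $\{1,\dotsc,n\}$, and $F_n/n \tod U$ with $U\sim\unif[0,1]$ independent of $S$.

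The main obstacle is the Poisson convergence of the unfolding process despite the algorithm's strong history-dependence: the events $\{\text{unfold at step }i\}$ are not independent. One must show that over disjoint macroscopic windows they decorrelate, which rests on the Markov property of $(w_i)$, the rapid mixing of the heavily biased walk away from the axis, and the sharp asymptotic $\Pr[\hr(w_i)=0]\sim C/i$. A further layer of care is needed to argue that the uniform suffix lengths drawn inside successive unfoldings remain asymptotically independent of the history and of each other, so that the limit inherits the form $\sum_{x\in S}\unif[0,x]$ with the correct joint distribution.
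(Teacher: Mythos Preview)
Your overall decomposition of $M_n$ and $R_n$, the identification of $F_n/n\tod U$ via the uniform point of the output \mluka\ path, and the computation of the average unfolding rate $\sim 1/(2i)$ are all correct and match the paper. However, you have missed the central simplification. You assert that the events $B_i=\{\text{unfold at step }i\}$ are not independent and propose to recover Poisson convergence via Stein--Chen coupling and mixing of the biased walk. In fact, the paper's Lemma~\ref{lem} shows that the $B_i$ are \emph{exactly} mutually independent: because the post-iteration law of $w_i$ is $\Pi_i$ \emph{regardless of whether the branch was taken}, the path $w_i$ is independent of $(B_1,\dotsc,B_i)$, and $B_{i+1}$ is then a measurable function of $w_i$ and a fresh Bernoulli step. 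You already state this very invariant, so the independence is one line away; instead you draw the opposite conclusion and head toward a substantially harder argument that you only sketch and whose details (rapid mixing ``away from the axis'') are not obviously available. With independence in hand, convergence of $\{i/n:B_i=1\}$ to the Poisson process $S$ is immediate from a standard independent-Bernoulli Poisson limit, and the uniform pointing costs are fresh coins independent of everything else, so the representation $\sum_{x\in S}\unif[0,x]$ requires no additional decorrelation step.

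A smaller correction on $R_n$: the final decoration has $\hr(w_n)+1$ coordinates, and under $\Pi_n$ the reduced height is that of a drift-zero meander, hence of order $\sqrt n$ in expectation (the paper invokes \cite[Theorem~6]{banderier} for this), not $O(\log n)$ as you write. The conclusion $R_n/n\tod\beta$ is unaffected, but your bound for that term is wrong as stated.
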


The cost $\beta$ of drawing a Bernoulli variable is bounded from below by its
entropy:
\[\beta\ge\eta\text,\qquad
\eta=-\tfrac1{m+1}\log_2\bigl(\tfrac1{m+1}\bigr)-
\tfrac{m}{m+1}\log_2\bigl(\tfrac{m}{m+1}\bigr)\text.\]
With Knuth and Yao's algorithm \cite{knuth}, it is possible, with sufficient
grouping, to reach a value of~$\beta$ as close to $\eta$ as desired. Since an
\mdyck\ path of length~$n$ has entropy asymptotically $\eta n$ (as can be seen
from \eqref{L}), the number of random bits drawn by the algorithm can thus be
made to be asymptotically optimal.

The random variable~$X$ is studied in more detail in Section~\ref{sec:limit};
there, we show that $\mathbb E(X) = 1/4$ and $\mathbb V(X) = 1/12$. With the
values $\mathbb E(U) = 1/2$ and $\mathbb V(U) = 1/12$, this entails estimates
for the expectation and variance of $M_n$:
\begin{align*}
\mathbb E(M_n)&\sim\frac{7n}4\text;&\mathbb V(M_n)&\sim\frac{n^2}6\text.
\end{align*}

The crucial point in the proof of both theorems is a loop invariant given
in the lemma below. Consider a uniformly distributed decorated \mdyck\
prefix; let~$w$ be its underlying path. Since there are $rm^{\hr(w)}$
possible decorations of~$w$ and~$r$ depends only on~$n$, the path~$w$ is
distributed with a probability proportional to~$m^{\hr(w)}$. We denote
by~$\Pi_n$ that distribution on the \mdyck\ prefixes.

\begin{lemma} \label{lem}
Let $0\le i\le n$ and let $r = i\bmod m+1$. After $i$ iterations of the
$\mathbf{for}$ loop, the path~$w$ is distributed according to~$\Pi_i$.
Moreover, let $B_i$ be the event that the $\mathbf{if}$ branch is taken. The
events~$B_i$ are independent and satisfy:
\[\proba(B_i) = \frac{r}{mi + r}\text.\]
\end{lemma}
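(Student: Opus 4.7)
The plan is to strengthen the statement for induction: I prove by induction on $i$ that, conditional on any realization of $(B_1,\dots,B_i)$ of positive probability, the path $w$ after $i$ iterations is distributed according to $\Pi_i$. This stronger statement immediately implies the independence of the $B_j$'s and the formula for $\proba(B_i)$, since given any past history $w\sim\Pi_i$ remains intact and $B_{i+1}$ then depends only on $w$ and on the fresh randomness of iteration $i+1$. The base case $i=0$ (empty path) is trivial.

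For the inductive step at iteration $i+1$, set $r=(i+1)\bmod(m+1)$ and $Z_j=P_j(m)=\binom{j}{\lfloor j/(m+1)\rfloor}$ (Proposition~\ref{prop:enum}). Fix an $m$-Dyck prefix $w'$ of length $i+1$ and compute its conditional probability by summing over the two branches. In Branch~1 (no unfolding), $w'$ has a unique predecessor $w$ obtained by removing the last step; tracking how $\hr$ transforms under appending $\U$ or $\D$ and multiplying by the step probabilities $\frac{m}{m+1}$ and $\frac{1}{m+1}$, the contribution is $\frac{m^{\hr(w')+1}}{Z_i(m+1)}$ when $r\ne 0$ and $\frac{m^{\hr(w')}}{Z_i(m+1)}$ when $r=0$ (in which case Branch~2 never triggers, since no \mluka{} path has length divisible by $m+1$).

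Branch~2 is the crux. When $r\ne 0$, the intermediate path $v$ just before the $\mathbf{if}$ test is \mluka{} only if $v=w\D$ with $h(w)<m$; the constraint $\hr(v)=-1$ then forces $h(w)=r-1$ and $\hr(w)=0$, so the quantity $\Pi_i(w)\cdot\frac{1}{m+1}=\frac{1}{Z_i(m+1)}$ is the \emph{same} for every such $v$. Hence $v$ is uniformly distributed on \mluka{} paths of length $i+1$, and uniform pointing yields a uniform pointed \mluka{} path. By the folding bijection of Section~\ref{sec:folding}, this corresponds to a uniform decorated \mdyck{} prefix; forgetting the decoration, $w'$ appears with probability proportional to its $rm^{\hr(w')}$ decorations, contributing $\frac{rm^{\hr(w')}}{(i+1)Z_i(m+1)}$.

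Adding the branches yields $\frac{m^{\hr(w')}}{Z_i(m+1)}\cdot\frac{m(i+1)+r}{i+1}$, which collapses to $m^{\hr(w')}/Z_{i+1}=\Pi_{i+1}(w')$ via the identity $Z_{i+1}/Z_i=(m+1)(i+1)/(m(i+1)+r)$, itself a routine consequence of Proposition~\ref{prop:enum} and the arithmetic of $i+1=(m+1)\lfloor(i+1)/(m+1)\rfloor+r$. The formula $\proba(B_{i+1})=r/(m(i+1)+r)$ drops out either by summing the Branch~2 contribution over $w'$ or directly as $L_{i+1}/(Z_i(m+1))$. The main obstacle is the Branch~2 analysis: recognizing that uniform pointing followed by unfolding is precisely what turns the (trivially uniform) distribution of $v$ into the correct $\Pi_{i+1}$-weighting on prefixes, together with the binomial algebra.
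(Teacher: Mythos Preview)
Your proof is correct and follows essentially the same inductive strategy as the paper: both arguments use the folding bijection to show that the $\mathbf{if}$ branch outputs a path with law $\Pi_{i+1}$, and both deduce independence of the $B_i$ from the fact that the conditional law of $w$ after each iteration is $\Pi_i$ regardless of which branch was taken. The paper is somewhat terser, noting in one stroke that after appending a step with the biased coin the probability of any resulting path is proportional to $m^{\hr(w)}$ (which absorbs your case analysis on the last step and on whether $r=0$), but the substance is identical.
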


\begin{proof}
We work by induction on~$i$. First, consider the path~$w$ after adding a
random step. Since a $\U$ step is $m$ times as likely to be drawn than a $\D$
step, the probability of a given path~$w$ to appear is proportional
to~$m^{\hr(w)}$.

We now study the distribution of~$w$ after the \textbf{if} branch,
distinguishing whether or not it was taken.
\begin{itemize}
\item If the \textbf{if} branch is not taken, the path~$w$ is an \mdyck\
prefix, distributed according to~$\Pi_{i+1}$.
\item If the \textbf{if} branch is taken, the path~$w$ is a uniformly
distributed \mluka\ path since $\hr(w) = -1$. After pointing and unfolding, it
is therefore a uniformly distributed decorated \mdyck\ prefix, which means
that it is distributed like~$\Pi_{i+1}$ after forgetting the point.
\end{itemize}
This shows that $w$ is distributed like~$\Pi_{i+1}$. Moreover, the probability
that the branch is taken and not taken are proportional to $L_nm^{-1}$ and
$P_n(m)$, respectively (see Proposition~\ref{prop:enum}), which gives the
value of~$\proba(B_i)$. The independence comes from the fact that $w$ does not
depend on whether the branch is taken.
\end{proof}

\begin{proof}[Proof of Theorem~\ref{thm:correct}]
According to Lemma~\ref{lem}, after the execution of the \textbf{for} loop,
the path~$w$ is distributed like~$\Pi_n$. Drawing a random decoration
therefore yields a uniformly distributed decorated \mdyck\ prefix. After
folding, the result is thus a uniformly distributed \mluka\ path.
\end{proof}

\begin{proof}[Proof of Theorem~\ref{thm:analysis}]
Let us begin with the random bit cost. There are three places in the algorithm
which contribute to it and we analyse them separately.
\begin{itemize}
\item\emph{Drawing steps} (Line~3). This costs $n\beta$ random bits.
\item\emph{Randomly pointing the path} (Line~5). This costs $\bigoh(\log i)$ if
done at the $i$th iteration of the loop. According to Lemma~\ref{lem}, this
happens with probability $\bigoh(1/i)$. Summing for $i=1,\dotsc,n$, the
average total cost is $\bigoh(\log^2n)$.
\item\emph{Randomly decorating the path} (Line~7). This costs $\bigoh(h)$,
where~$h$ is the height of~$w$. To estimate this, we use
\cite[Theorem~6]{banderier}. Since the probability of a path~$w$ is
proportional to $m$ raised to the power of its number of up steps, that path
is distributed like a random meander with drift zero (see the reference for
details), which proves that the average height is $\bigoh(\sqrt n)$.
\end{itemize}
Overall, only the cost of Line~3 is significant. Let us move on to the cost in
memory accesses, which also occur in three places.
\begin{itemize}
\item\emph{Writing steps} (Line~3). This costs $n$ memory accesses.
\item\emph{Unfolding the path} (Line~6). Unfolding a pointed path~$pq$ of
length~$i$ only requires accessing the part~$q$. Since the point is uniformly
drawn, the cost is uniformly distributed in $\{1,\dotsc,i\}$. Moreover,
observe that the probability for this to occur given in Lemma~\ref{lem}, when
averaged over $m+1$ consecutive values of~$i$, is equivalent to~
$\ifrac1{2i}$.

Let $S_n$ be the set of sizes~$i$ such that $B_i$ holds. Since the $B_i$'s are
independent, the set $S_n/n$ converges to the Poisson point process~$S$.
Therefore, the number of memory accesses divided by~$n$ tends in distribution
to~$X$.
\item\emph{Folding the path} (Line~8). Folding a path into a pointed
\mluka\ path~$pq$ only requires to access the part~$q$. Since that path is
uniformly distributed, the length of~$q$ is uniformly distributed
in~$\{1,\dotsc,n\}$.
\end{itemize}
Summing all three contributions (which are independent) yields the result.
\end{proof}

\section{Properties of the limit distribution} \label{sec:limit}

This last section consists in the study of the limit law~$X$ involved in
Theorem~\ref{thm:analysis}.

\begin{theorem}
The cumulants of the variable~$X$ are:
\[\kappa_n(X) = \frac1{2n(n+1)}\text.\]
\end{theorem}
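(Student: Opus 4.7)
The plan is to compute the cumulant generating function $K(t) = \log\mathbb{E}[e^{tX}]$ directly via Campbell's exponential formula for Poisson functionals, then read off the cumulants from the Taylor coefficients. For a Poisson point process $S$ on $(0,1]$ with intensity $\lambda(x)\,dx$ and independent marks $Y_x$ attached to each point $x$, the formula reads
\[K(t) = \int_0^1 \bigl(\mathbb{E}[e^{tY_x}] - 1\bigr)\lambda(x)\,dx,\]
provided the integrand is integrable.

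The first step is to compute the inner moment generating function. Since each mark $Y_x\sim\unif[0,x]$, one has $\mathbb{E}[e^{tY_x}] = (e^{tx}-1)/(tx)$, and expanding in a power series yields
\[\mathbb{E}[e^{tY_x}] - 1 = \sum_{n\ge 1}\frac{(tx)^n}{(n+1)!}.\]
Note in passing that this is $O(tx)$ as $x\to 0$, which is integrable against $\lambda(x) = 1/(2x)$ on $(0,1]$; so Campbell's formula applies on a neighbourhood of $t = 0$ despite $S$ having infinitely many points accumulating at the origin. Substituting and exchanging sum and integral (justified by monotone convergence for $t>0$) gives
\[K(t) = \sum_{n\ge 1}\frac{t^n}{2(n+1)!}\int_0^1 x^{n-1}\,dx = \sum_{n\ge 1}\frac{t^n}{2n(n+1)!}.\]

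Finally, comparing with the defining expansion $K(t) = \sum_{n\ge 1}\kappa_n(X)\,t^n/n!$ gives $\kappa_n(X) = n!/(2n(n+1)!) = 1/(2n(n+1))$, the announced formula. There is no real obstacle; the only subtlety is the integrability at $x = 0$, which is settled by the $O(tx)$ bound above. A pleasant sanity check is that the first two cumulants reproduce the values $\mathbb{E}(X) = 1/4$ and $\mathbb{V}(X) = 1/12$ stated just before the theorem.
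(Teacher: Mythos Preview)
Your proof is correct and follows essentially the same approach as the paper: both compute $K(t)=\int_0^1\bigl(\tfrac{e^{tx}-1}{tx}-1\bigr)\tfrac{dx}{2x}$ via Campbell's formula (which the paper applies without naming) and then read off the Taylor coefficients. The only cosmetic difference is that the paper first performs the change of variables $y=tx$ to rewrite $K$ as $\int_0^t\tfrac{e^y-1-y}{2y^2}\,dy$ before expanding, while you expand the integrand directly; your added remarks on integrability at $x=0$ are a welcome clarification the paper omits.
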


In particular, we have $\mathbb E(X) = \kappa_1(X) = 1/4$ and $\mathbb V(X) =
\kappa_2(X) = 1/12$. Moreover, the theorem can be reformulated in terms of the
cumulant generating function of~$X$:
\begin{equation} \label{K}
K(z) = \int_0^z\frac{e^y-1-y}{2y^2}\dy\text.
\end{equation}

\begin{proof}
We compute the cumulant generating function of~$X$ from its definition,
knowing that the moment generating function of $\unif[0,x]$ is
$(e^{xz}-1)/(xz)$:
\[K(z) = \int_0^1\biggl(\frac{e^{xz}-1}{xz}-1\biggr)\frac{\dx}{2x}\text,\]
which is equivalent to \eqref{K} by a change of variables. The cumulants are
extracted by Taylor expansion around~$z = 0$.
\end{proof}

Our final results concern the distribution function $F(x) = \proba(X\le x)$
and tail distribution $\bar F(x) = \proba(X > x)$.

\begin{theorem}
The function~$F$ satisfies, for $x > 0$, 
the differential equation:
\begin{equation} \label{equadiff}
F(x) + F\p(x) + 2xF\pp(x) = F(x-1)\text.
\end{equation}
For $0\le x \le 1$, its value is:
\begin{equation} \label{smallx}
F(x) = \sqrt{\frac{2e^{1-\gamma}}{\pi}}\sin\sqrt{2x}\text,
\end{equation}
where $\gamma$ is Euler's constant. As $x$ tends to infinity, the tail
distribution satisfies:
\begin{equation} \label{largex}
\bar F(x) = x^{-x}(\log x)^{-2x}(e/2)^{x + o(x)}\text.
\end{equation}
\end{theorem}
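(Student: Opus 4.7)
The plan is to prove the three claims in turn, starting from the integral formula~\eqref{K} for the cumulant generating function.

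\textbf{The differential equation.} Differentiating \eqref{K} gives $K'(z) = (e^z-1-z)/(2z^2)$. Since $\Phi(z)=\mathbb E(e^{zX})=e^{K(z)}$ satisfies $\Phi'=K'\Phi$, one obtains the first-order ODE $2z^2\Phi'(z)=(e^z-1-z)\Phi(z)$. I would translate this back to an equation on~$F$ via the Laplace transform $\hat F(z)=\int_0^\infty e^{zx}F(x)\,dx=-\Phi(z)/z$ for $z<0$. Computing the Laplace transform of $F(x)+F'(x)+2xF''(x)-F(x-1)$ by repeated integration by parts (using $F(0)=0$, the decay of $F'$ at infinity, and the shift identity $\int_0^\infty e^{zx}F(x-1)\,dx=e^z\hat F(z)$) collapses to a constant multiple of $(e^z-1-z)\Phi(z)-2z^2\Phi'(z)$, which vanishes by the ODE on~$\Phi$; uniqueness of the Laplace transform then yields~\eqref{equadiff}.

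\textbf{Closed form on $[0,1]$.} Since $X\ge 0$, we have $F(x-1)=0$ for $x<1$, so \eqref{equadiff} reduces to the homogeneous equation $F+F'+2xF''=0$. The change of variable $u=\sqrt{2x}$ turns it into the harmonic oscillator $F_{uu}+F=0$, whose general solution is $A\sin u+B\cos u$; the boundary condition $F(0)=0$ forces $B=0$. To identify $A$, I would analyse $\Phi(z)=e^{K(z)}$ as $z\to-\infty$. Antidifferentiating~\eqref{K} by parts gives the closed form $K(z)=(1+z-e^z)/(2z)+\tfrac12\bigl(\mathrm{Ei}(z)-\log|z|-\gamma\bigr)$, so $K(z)\to(1-\gamma)/2-\tfrac12\log|z|$ and hence $\Phi(z)\sim e^{(1-\gamma)/2}|z|^{-1/2}$. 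On the other hand, Karamata's Tauberian theorem applied to the expected near-zero behaviour $F(x)\sim A\sqrt{2x}$ gives $\Phi(-s)\sim A\sqrt{\pi/(2s)}$ as $s\to\infty$. Matching the two estimates yields $A=\sqrt{2e^{1-\gamma}/\pi}$.

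\textbf{Tail asymptotic.} The constant function~$1$ satisfies~\eqref{equadiff} trivially, so $\bar F=1-F$ satisfies the same ODE. Writing $\bar F(x)=e^{-\phi(x)}$ with $\phi$ super-linear, the leading-order estimates $\bar F''(x)\sim\phi'(x)^2\bar F(x)$ and $\bar F(x-1)/\bar F(x)\sim e^{\phi'(x)}$ reduce \eqref{equadiff} to the dominant balance $2x\phi'(x)^2\sim e^{\phi'(x)}$, whence $\phi'(x)=\log x+2\log\log x+\log 2+o(1)$. Integrating and absorbing the subdominant $-2\,\mathrm{li}(x)=-2x/\log x+o(x)$ into the error yields $\phi(x)=x\log x+2x\log\log x+(\log 2-1)x+o(x)$, which matches~\eqref{largex}. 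The main obstacle is making this heuristic rigorous: the formal ansatz identifies only the leading orders of $\phi$, so producing matching upper and lower bounds on $\bar F$ requires first establishing a crude a~priori estimate of the form $\bar F(x)=\exp(-\Theta(x\log x))$ (e.g.\ from the representation $X=\sum_i U_i e^{-(T_1+\cdots+T_i)}$ with $U_i$ uniform and $T_i$ iid $\mathrm{Exp}(1/2)$) and then bootstrapping via~\eqref{equadiff} to refine the exponent to the stated precision.
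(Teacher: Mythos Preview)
Your treatment of \eqref{equadiff} and \eqref{smallx} is correct and close to the paper's. The paper also works through the Laplace transform $\mathcal LF(z)=e^{K(-z)}/z$ and derives the same first-order ODE on it; for \eqref{smallx} the paper takes a slightly different route, splitting $\mathcal LF$ as $\sqrt{e^{1-\gamma}}\,z^{-3/2}e^{-1/(2z)}$ plus an error that is $\bigoh(e^{-z}z^{-7/2})$, and then reading off the sine from a standard inverse-transform table while showing the error is supported on $[1,\infty)$ and $C^2$. Your alternative---solve the homogeneous ODE on $(0,1)$ via $u=\sqrt{2x}$ and fix the constant by the $z\to-\infty$ asymptotic of $\Phi$---is a clean variant; just note that you are implicitly using $C^2$-regularity of $F$ on $(0,1)$ before you have it, so you should either invoke Fourier integrability of $\Phi$ to get a density first, or phrase the matching as a direct Laplace-transform identity for $\sin\sqrt{2x}$.

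For \eqref{largex} your approach is genuinely different from the paper's, and the gap you flag is real. The paper does \emph{not} attack the ODE; it writes $\bar F(x)=\frac1{2\pi i}\int_{c-i\infty}^{c+i\infty}e^{-xz+K(z)}\,\frac{dz}{z}$ and carries out a full saddle-point analysis: locating the saddle $c$ via $K'(c)-1/c=x$ (which gives exactly your $c=\log x+2\log\log x+\log2+o(1)$), checking the local quadratic approximation on a window of width $x^{-a}$ with $1/3<a<1/2$, and bounding the tails of the contour explicitly. This yields \eqref{largex} rigorously in one pass. Your dominant-balance computation recovers the same exponent, but the proposed rigorization---an a~priori $\exp(-\Theta(x\log x))$ bound from the representation $X=\sum_i V_ie^{-(T_1+\cdots+T_i)}$ followed by ODE bootstrapping---is substantially harder to execute than you suggest: the delay equation \eqref{equadiff} does not propagate two-sided bounds on $\bar F$ in any standard way, and extracting matching upper and lower bounds to $o(x)$ precision in the exponent from the ODE alone would essentially require reinventing the saddle-point estimate. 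If you want to keep the ODE heuristic as motivation, the cleanest way to make it rigorous is to switch to the integral representation and do the saddle-point argument, which is what the paper does.
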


Note that the equation \eqref{equadiff}, with the initial conditions
\eqref{smallx}, suffices to determine~$F$. Indeed, working inductively on
the intervals $[n,n+1]$, it can be seen as an inhomogeneous ordinary linear
differential equation with initial conditions given by differentiability
at~$n$.

Moreover, we can deduce from that equation the singularity profile of~$F$:
since $F$ is not differentiable at~$0$, $F''$ is not differentiable at~$1$ due
to the term~$F(x-1)$. In the same way, $F$ has a singularity at every integer
point~$n$, where it is exactly $2n$ times continuously differentiable.

Finally, since $F$ is twice differentiable for~$x > 0$, the distribution~$X$
admits a density function~$f = F'$, which shares similar properties.

\begin{figure}[htb]\small\hfil%
\begin{tikzpicture}[scale=1.6]
\draw[help lines] (0,0) grid (2,1);
\draw (0,0) node [anchor=north east] {$0$} -- (0,1) node [anchor=east] {$1$};
\draw[-latex] (0,0) -- (1,0) node [below] {$1$} -- (2,0) node [right] {$x$};
\draw[semithick] plot[smooth] file {F0.table} node [right] {$F_0(x)$};
\end{tikzpicture}\hfil\hfil%
\begin{tikzpicture}[scale=1.6]
\draw[help lines] (0,0) grid (2,1);
\draw (0,0) node [anchor=north east] {$0$} -- (0,1) node [anchor=east] {$1$};
\draw[-latex] (0,0) -- (1,0) node [below] {$1$} -- (2,0) node [right] {$x$};
\draw[semithick] plot[smooth] file {F.table} node [right] {$F(x)$};
\end{tikzpicture}\hfil
\caption{Left: a plot of the function $F_0(x) =
\sqrt{2e^{1-\gamma}/\pi}\sin\sqrt{2x}$. Right: a plot of the distribution
function~$F(x)$ computed from the differential equation \eqref{equadiff}. The
function~$F(x)$ is equal to $F_0(x)$ until $x = 1$ and then deviates from it.}
\end{figure}

\begin{proof}
We prove these results using the Laplace transform of~$F$, which is given by
$\laplace F(z) = e^{K(-z)}/z$. From \eqref{K}, we get:
\[-2z^2{\laplace F}\p(z) = (e^{-z} - 3 + z)\laplace F(z)\text,\]
which translates into \eqref{equadiff} whenever $F$ is twice differentiable.
We now put the Laplace transform into the form\footnote{%
This comes from the fact that {$\int_0^1(e^{-y}-1+y)/y^2\dy +
\int_1^\infty e^{-y}/y^2\dy = \gamma$}, which can itself be derived by
integrating by parts twice to get {$-\int_0^\infty e^{-y}\log y\dy$}.
That integral is also linked to the exponential integral function, in which
the constant $\gamma$ famously plays a role (see for instance
\cite[Chapter~5]{abramowitz}).}
, valid for $z\not\in\mathbb R^-$:
\begin{align*}
\laplace F(z) &=
\frac{\sqrt{e^{1-\gamma}}}{z^{3/2}}\expraised{-\frac1{2z}}\exp\biggl(\int_z^\infty\frac{e^{-y}}{2y^2}\dy\biggr)\\
&= \frac{\sqrt{e^{1-\gamma}}}{z^{3/2}}\expraised{-\frac1{2z}} +
\bigoh\biggl(\frac{e^{-z}}{z^{7/2}}\biggr)
\end{align*}
as $\abs z$ tends to infinity in any direction. The main term transforms back
into~\eqref{smallx} (see \cite[29.3.78]{abramowitz}). The inverse transform of
the error term is supported for $x\ge1$ (since it is $\bigoh(e^{-z})$ as $z$
tends to infinity) and is twice differentiable (since, multiplied by~$z^2$, it
is integrable on~$i\mathbb R$). This proves \eqref{equadiff} and
\eqref{smallx}.

\bigskip

Getting the asymptotics for large~$x$ is trickier. We use a saddle point
approximation, widely used in statistics to estimate tail densities and
distributions \cite{daniels,lugannani}. General saddle point asymptotics are
described in detail in \cite[Chapter~VIII]{flajolet}. We compute the tail
distribution using the formula, valid for~$c > 0$:
\[\bar F(x) = \frac1{2\pi i}
\int_{c-i\infty}^{c+i\infty}\frac{e^{-xz + K(z)}}{z}\dz\text.\]
Write $\xi(z) = K(z) - \log z$. We choose $c$ to be the real point where the
integrand is smallest (the saddle point), given by:
\[\xi\p(c) = \frac{e^c - 1 - 3c}{2c^2} = x\text.\]
This entails:
\[c = \log x + 2\log\log x + \log 2 + o(1)\text.\]
Moreover, $\xi(c)$ and all its derivatives are asymptotic to~%
$\ifrac{e^c}{2c^2}\sim x$.

Let $d = x^{-a}$ with $1/3 < a < 1/2$ and let $\bar F_0(x)$ and $\bar F_1(x)$
be:
\begin{align*}
\bar F_0(x) &= \frac1{2\pi i}\int_{c-id}^{c+id}e^{-xz + \xi(z)}\dz\text;\\
\bar F_1(x) &= \bar F(x) - \bar F_0(x)\text.
\end{align*}
We prove below that all the weight of the integral is concentrated
in~$\bar F_0(x)$ and that we have the saddle point approximation:
\begin{equation} \label{saddle}
\bar F(x) \sim \bar F_0(x) \sim \frac{e^{-xc +
\xi(c)}}{\sqrt{2\pi x}}\text.
\end{equation}
This evaluates to \eqref{largex} (the denominator is subsumed into the error
term).

To show that the approximation \eqref{saddle} is valid, we check that the
conditions detailed in \cite[Theorem~VIII.3]{flajolet} are satisfied.
\begin{itemize}
\item First, we need to check that $\bar F_0(x)$ satisfies \eqref{saddle}. To
do that, we do a Taylor expansion of $\xi(z)$ around the point $c$:
\[\xi(z) = \xi(c) + x(z-c) + \frac{\xi\pp(c)}{2}(z-c)^2 +
\bigoh\bigl(\xi'''\mspace{-1mu}(c)(z-c)^3\bigr)\text.\]
Since $\xi\ppp(c)\s d^3\sim xd^3\to0$, the error term tends in fact to zero,
uniformly for all $z$ such that $\abs{z-c} \le d$. This entails that $\bar
F_0(x)$ is approximated by the integral:
\[\bar F_0(x)\sim\frac{e^{-xc + \xi(c)}}{2\pi}
\int_{-d}^{d}\expraised{-\frac{\xi\pp\ns(c)\s t^2}{2}}\dt\text.\]
Since $\xi\pp(c)\s d^2\sim xd^2\to\infty$, the integral can be completed to
$\mathbb R$, which gives a Gaussian integral evaluating to~\eqref{saddle}.

\item Second, we need to check that the integral $\bar F_1(x)$ is negligible.
Using the estimate \eqref{saddle} of $\bar F_0(x)$, we compute:
\begin{align*}
\bgabs{\frac{\bar F_1(x)}{\bar F_0(x)}} &\le \sqrt{\frac{x}{2\pi}}
\int_{\abs t > d}\babs{e^{\xi(c+it) - \xi(c)}}\dt\\
&\le \sqrt{\frac{x}{2\pi}}\s e^{\rho(x)}
\int_{\abs t > d}\Babs{\frac c{c+it}}^{3/2}\dt\\
&= \bigoh\bigl(c\sqrt x\s e^{\rho(x)}\bigr)\text,
\end{align*}
where:
\[\rho(x) = \sup_{\abs t > d}\,
\real\biggl(\int_c^{c+it}\frac{e^y-1}{2y^2}\dy\biggr)\text.\]
This means that the ratio $\bar F_1(x)/\bar F_0(x)$ tends to zero as soon as $\rho(x)$ tends to
$-\infty$ sufficiently fast (like a power of~$x$). To show this, we set the
contour of integration to $c\to1\to1+it\to c+it$, which follows the direction
of steepest descent around the endpoints and avoids the singularity at zero.
The contribution of the interval $[1,1+it]$ is bounded, as is the contribution
of the term~$1/y^2$.  Grouping the other two intervals together, we find:
\begin{align*}
\rho(x)
&\sim\sup_{\abs t > d}\,
\int_1^c\real\biggl(\frac{e^{s+it}}{2(s+it)^2}-\frac{e^s}{2s^2}\biggr)\ds\\
&\le\sup_{\abs t > d}\,
\int_1^c\biggl(\frac{e^s}{2(s^2+t^2)} - \frac{e^s}{2s^2}\biggr)\ds\\
&\sim \sup_{\abs t > d}\,
-\frac{e^ct^2}{2c^2(c^2+t^2)}
= -\frac{e^cd^2}{2c^2(c^2+d^2)} \sim -\frac{x^{1-2a}}{\log^2 x}\text.
\end{align*}
This concludes the proof.\qedhere
\end{itemize}
\end{proof}

\bibliographystyle{abbrv}
\bibliography{biblio}{}

\end{document}